\pdfoutput=1
% Generated by numapde-prepare-manuscript.py on 20220901-22:06:35
% numapde-prepare-manuscript.py

% A LaTeX template for the numapde-preprint class

% Prepend numapde-latex/ to input path if initialized
\makeatletter
\IfFileExists{./numapde-latex/numapde-packages.sty}{\providecommand*{\input@path}{}\edef\input@path{{./numapde-latex/}\input@path}}{}
\makeatother

% Load documentclass
\documentclass{numapde-preprint}

% Load standard packages
\usepackage{numapde-semantic}
\usepackage{numapde-style}
\usepackage{numapde-local}
\usepackage{numapde-syntax}

% Define the biblatex resources
\addbibresource{numapde-local.bib}
\IfFileExists{./numapde-bibliography/numapde.bib}{\addbibresource{./numapde-bibliography/numapde.bib}}{\addbibresource{numapde.bib}}

% Setup hyperref
\hypersetup{
	pdftitle={Preconditioning for a Phase-Field Model with Application to Morphology Evolution in Organic Semiconductors},
	pdfauthor={Kai Bergermann, Carsten Deibel, Roland Herzog, Roderick C. I. MacKenzie, Jan-Frederik Pietschmann, Martin Stoll},
	pdfkeywords={preconditioning, phase--field models, organic solar cells, Cahn--Hilliard, finite element analysis}
}

% Set the title and subtitle information
\title{Preconditioning for a Phase-Field Model with Application to Morphology Evolution in Organic Semiconductors}
\subtitle{}
\shorttitle{Preconditioning for Morphology Evolution in Organic Semiconductors}

% Set author information
\author{Kai Bergermann\thanks{Technische Universität Chemnitz, Faculty of Mathematics, 09107 Chemnitz, Germany (\email{kai.bergermann@math.tu-chemnitz.de}, \url{https://www.tu-chemnitz.de/mathematik/wire/people/bergermann.php}, \orcid{0000-0002-2259-1839}, \email{jfpietschmann@math.tu-chemnitz.de}, \url{https://www.tu-chemnitz.de/mathematik/invpde/}, \orcid{0000-0003-0383-8696}, \email{martin.stoll@math.tu-chemnitz.de}, \url{https://www.tu-chemnitz.de/mathematik/wire/prof.php}, \orcid{0000-0003-0951-4756}).}
\and
Carsten Deibel\thanks{Technische Universität Chemnitz, Department of Physics, 09126 Chemnitz, Germany (\email{deibel@physik.tu-chemnitz.de}, \url{https://www.tu-chemnitz.de/physik/OPKM/}, \orcid{0000-0002-3061-7234}).}
\and
Roland Herzog\thanks{Interdisciplinary Center for Scientific Computing, Heidelberg University, 69120 Heidelberg, Germany (\email{roland.herzog@iwr.uni-heidelberg.de}, \url{https://scoop.iwr.uni-heidelberg.de}, \orcid{0000-0003-2164-6575}).}
\and
Roderick C. I. MacKenzie\thanks{Durham University, Department of Engineering, Durham, UK (\email{roderick.mackenzie@durham.ac.uk}, \url{https://www.durham.ac.uk/staff/roderick-mackenzie/}, \orcid{0000-0002-8833-2872}).}
\and
Jan-Frederik Pietschmann\footnotemark[1]
\and
Martin Stoll\footnotemark[1]}
\shortauthor{K. Bergermann, C. Deibel, R. Herzog, R. C. I. MacKenzie, J.-F. Pietschmann and M. Stoll}

% Set dedication information
\dedication{}

% Turn off hyperref colors in case the document has change marks
\IfFileExists{numapde-uncolorizeHyperrefIfChanges.sty}{\RequirePackage{numapde-uncolorizeHyperrefIfChanges}}{}

\begin{document}
\maketitle

% Insert abstract
\begin{abstract}
The Cahn--Hilliard equations are a versatile model for describing the evolution of complex morphologies. 
In this paper we present a computational pipeline for the numerical solution of a ternary phase-field model for describing the nanomorphology of donor--acceptor semiconductor blends used in organic photovoltaic devices. 
The model consists of two coupled fourth-order partial differential equations that are discretized using a finite element approach. 
In order to solve the resulting large-scale linear systems efficiently, we propose a preconditioning strategy that is based on efficient approximations of the Schur-complement of a saddle point system. 
We show that this approach performs robustly with respect to variations in the discretization parameters. 
Finally, we outline that the computed morphologies can be used for the computation of charge generation, recombination, and transport in organic solar cells.\end{abstract}

% Insert keywords
\begin{keywords}
preconditioning, phase--field models, organic solar cells, Cahn--Hilliard, finite element analysis\end{keywords}

% Insert Mathematics Subject Classification (MSC2010)

% Insert document body
\section{Introduction}
\label{section:introduction}

We consider a model for solvent-based fabrication of organic solar cells.
A thin-film of a dilute blend containing electron-acceptor, electron-donor and solvent is deposited on a substrate. 
As the solvent evaporates, the initially homogeneous mixture undergoes phase separation into electron-acceptor rich and electron-donor rich areas. 
In order to simulate the evolution of the morphology we use a phase-field model based on the Cahn--Hilliard equation \cite{CahnHilliard:1958:1}, which is a fourth order partial differential equation (PDE). 
The equations are derived from the minimization of the Ginzburg--Landau energy function via a gradient flow. 
The original Cahn--Hilliard equation models the evolution of two phases while in our case we require a system of three components and follow the model introduced in \cite{WodoGanapathysubramanian:2012:1}.
Our focus here is on establishing the model equations and after discretization via finite elements we focus on the preconditioning of the linear systems. 
As these will be of large scale and the system will be ill-conditioned, the convergence of any iterative solver will be slow unless we introduce a suitable preconditioning strategy. 
Our preconditioning approach for the ternary Cahn--Hilliard system is based on applying block-preconditioners \cite{BoschStollBenner:2014:1,BoschStoll:2015:2,BoyanovaDoQuangNeytcheva:2012:1,Zulehner:2011:1} and these rely on the coupling of well studied components such as algebraic multigrid methods.
Finally, in a proof-of-concept study we demonstrate how these morphologies can be incorporated into 2D electrical device simulations of organic solar cells. 
We demonstrate that our generated morphologies can significantly affect both the current voltage curves and the charge density within the active layer.

\section{Phase--Field Model}
\label{section:model}

The mathematical description of the morphology evolution can be done by a phase field model \cite[Chapter~10]{Raabe:1998:1}, that consists of a domain $\Omega \subset \R^3$ and three scalar fields 
\begin{equation}
	\phi_p, \; \phi_\nfa, \; \phi_s \colon \Omega \times [0,T] \rightarrow [0,1] \subset \R
\end{equation}
representing the volume fractions of polymer, non-fullerene acceptor (NFA) and solvent, respectively, at a given point in the domain at a given time in the interval $[0,T]$.
In this work, we focus on the numerical treatment of the model and leave the scaling of physical dimensions to future work.
The conservation relation
\begin{equation}\label{eq:sumphis}
	\phi_p + \phi_\nfa + \phi_s 
	= 
	1
\end{equation}
applies for all $x \in \Omega$ and every $t \in [0,T]$.
The Ginzburg--Landau energy functional forms the basis for the Cahn--Hilliard equation and is given by
\begin{equation}
	\label{eq:ginzburglandau}
	F(\phi_p, \phi_\nfa, \phi_s) 
	=
	\int_{\Omega} \paren[auto][]{f(\phi_p, \phi_\nfa, \phi_s) + \frac{\epsilon_p}{2} \abs[auto]{\nabla \phi_p}^2 + \frac{\epsilon_\nfa}{2} \abs[auto]{ \nabla \phi_\nfa }^2} \d x 
	+ 
	F_s(\phi_p, \phi_\nfa, x)
	,
\end{equation}
where $f(\phi_p, \phi_\nfa, \phi_s)$ accounts for the bulk energy which, according to the Flory--Huggins theory \cite{Flory:1953:1}, is chosen as 
\begin{multline}
	\label{eq:logarithmicpotential}
	f(\phi_p,\phi_\nfa,\phi_s) 
	= 
	\paren[Big][.{\frac{\phi_p}{N_p} \ln(\phi_p) + \frac{\phi_\nfa}{N_\nfa} \ln(\phi_\nfa) + \frac{\phi_s}{N_s} \ln(\phi_s)}
	\\
	\paren[Big].]{{}+ \chi_{p,\nfa} \phi_p \phi_\nfa + \chi_{p,s} \phi_p \phi_s + \chi_{\nfa,s} \phi_\nfa \phi_s}
	.
\end{multline}
The parameters $\epsilon_p$, $\epsilon_\nfa$ represent (ideally small) interface parameters that control the width of the transition layers between the different components.
Their values are chosen as $\epsilon_p = \epsilon_\nfa = 10^{-3}$ in most of our numerical experiments and \Cref{figure:minres_iterations_runtime_varying_epsilon} shows a comparison of different interface parameters.
In the numerical experiment section we replace the logarithmic bulk energy term \eqref{eq:logarithmicpotential} by a polynomial approximation, which is easier to handle numerically.

Furthermore, we denote by $\Gamma_b$ the part of the boundary $\partial\Omega$ that is in contact with the substrate material while $\Gamma_t$ denotes the top part where evaporation occurs.
Interactions with the substrate are incorporated by the quantity $F_s(\phi_p,\phi_\nfa,x)$, which acts only on the surface $\Gamma_b$. 
For our application, we chose a density of the form 
\begin{equation}\label{eq:surfenergy}
	f_s(\phi_p, \phi_\nfa, x) 
	= 
	p_p(x) (g_p \phi_p + h_p \phi_p^2)
	+ 
	p_\nfa(x) \paren[auto](){g_\nfa \phi_\nfa + h_\nfa \phi_\nfa^2}
	,
\end{equation}
where $p_p(x)$ and $p_\nfa(x)$ are functions defined for all points $x \in \Gamma_b$ on the film's interface with the substrate material.
Integration over the surface $\Gamma_b$ and multiplication with the usual factor gives
\begin{equation}
	F_s(\phi_p, \phi_\nfa, x) 
	= 
	- \displaystyle \int_{\Gamma_b} f_s(\phi_p, \phi_\nfa, x) \d\sigma
	.
\end{equation}
For more details we refer to \cite{Bergermann:2019:1}. 
At the top surface, we assume that evaporating solvent yields an increase in polymer and \nfa~concentration, see \cite{WodoGanapathysubramanian:2012:1} for details. 
This results in a non-homogeneous flux boundary condition with right hand side equal to the product $-k\phi_p\phi_s$ and $-k\phi_\nfa \phi_s$, respectively, where $k>0$ denotes a proportionality constant.
The starting point to derive equations for the time evolution of the scalar fields is the continuity equation which results from the local conservation of mass. 
We obtain
\begin{equation}
	\label{eq:cont}
	\frac{\partial \phi_i}{\partial t} 
	= 
	- \div \bJ_i
	,
	\quad 
	i \in \{p, \, \nfa, \, s\}
	.
\end{equation}
The thermodynamic force driving the evolution is the gradient of the respective variation of the free energy.
For simplicity, we assume a linear relation between flux and force, making the additional simplification that the coefficients are constant and that off-diagonal terms are zero, which yields 
\begin{equation}
	\label{eq:flux}
	\bJ_i 
	= 
	- M_i \nabla \mu_i
	,
	\quad 
	i \in \{p, \, \nfa, \, s\}
	,
\end{equation}
where we call $M_i$ the mobility coefficient. 
We remark that due to this simplification, the relation $\phi_p+\phi_\nfa+\phi_s=1$ is not rigorously satisfied in the evolution of the phase fields. However, as soon as segregation starts, is will hold approximately and thus we still chose to solve the system \eqref{eq:cont}--\eqref{eq:flux} only for the unknowns $\phi_p$ and $\phi_\nfa$ and compute $\phi_s$ via $\phi_s = 1 - \phi_\nfa - \phi_p$.

The resulting system in strong form reads
\begin{subequations}
	\label{eq:chternary:1}
	\begin{alignat}{4}
		\frac{\partial \phi_p}{\partial t} 
		&
		= 
		\div (M_p \nabla \mu_p) 
		&
		&
		\quad
		\text{and}
		\quad
		&
		\mu_p 
		&
		= 
		\frac{\partial f}{\partial \phi_p} - \epsilon^2 \nabla^2 \phi_p 
		& 
		&
		\quad
		\text{in } 
		\Omega \times [0,T]
		, 
		\\
		\frac{\partial \phi_\nfa}{\partial t} 
		&
		= 
		\div (M_\nfa \nabla \mu_\nfa) 
		&
		&
		\quad
		\text{and}
		\quad
		&
		\mu_\nfa 
		&
		= 
		\frac{\partial f}{\partial \phi_\nfa} - \epsilon^2 \nabla^2\phi_\nfa 
		& 
		&
		\quad
		\text{in } 
		\Omega \times [0,T]
		,
	\end{alignat}
\end{subequations}
with boundary conditions
\begin{subequations}
	\label{eq:chternary:2}
	\begin{alignat}{4}
		M_p \nabla \mu_p \cdot \bn 
		&
		= 
		\begin{cases}
			\frac{\partial f_s}{\partial \phi_p} 
			&
			\text{on }
			\Gamma_b \times [0,T]
			,
			\\
			-k \phi_p \phi_s
&
\text{on }
\Gamma_t \times [0,T]
,
\\			
			0
			&
			\text{on }
			\partial\Omega \setminus (\Gamma_b \cup \Gamma_t) \times [0,T]
			,
		\end{cases}
		\\
		M_\nfa \nabla \mu_\nfa \cdot \bn 
		&
		= 
		\begin{cases}
			\frac{\partial f_s}{\partial \phi_\nfa} 
			&
			\text{on }
			\Gamma_b \times [0,T]
			,
			\\
			-k \phi_\nfa \phi_s
			&
			\text{on }
			\Gamma_t \times [0,T]
			,
			\\			
			0
			&
			\text{on }
			\partial\Omega \setminus (\Gamma_b \cup \Gamma_t) \times [0,T]
			,
		\end{cases}
	\end{alignat}
\end{subequations}
and initial conditions
\begin{subequations}
	\label{eq:chternary:3}
	\begin{alignat}{2}
	\phi_p(x,0) 
	&
	= 
	\phi_p^0(x) 
	& 
	&
	\quad
	\text{in } 
	\Omega
	,
	\\
	\phi_\nfa(x,0) 
	&
	= 
	\phi_\nfa^0(x) 
	&
	&
	\quad
	\text{in } 
	\Omega
	.
	\end{alignat}
\end{subequations}
with $\bn$ denoting the outward unit normal vector of $\Omega$.
A sensible relationship between the two sets of equations is ensured by the mutually used potential term $f(\phi_p, \phi_\nfa, \phi_s)$. 
Without it, the solution of \eqref{eq:chternary:1}--\eqref{eq:chternary:3} would contain two independent solutions without any constraints like \eqref{eq:sumphis}.
Note that due to the boundary conditions from \eqref{eq:chternary:2} the mass conservation relation \eqref{eq:sumphis} may locally be violated in the vicinity of $\Gamma_t$ and $\Gamma_b$.
However, as one moves away from the boundaries, the relation is restored by the bulk potential from \eqref{eq:logarithmicpotential}.

\begin{remark}[Existence of solutions]
	In view of the constant mobility, we expect (global-in-time) existence of weak solutions shown using a fixed point argument as in \cite{ElliottLuckhaus:1991:1}. 
	The additional difficulty here are the non-homogeneous boundary conditions and their impact on energy equalities. 
	In particular, it would be interesting to see whether the model can still be understood as a gradient flow. 
	We leave this question as future research.
\end{remark}

\section{Discretization and Preconditioning}
\label{section:discretization_preconditioning}

The discretization of the model \eqref{eq:chternary:1}--\eqref{eq:chternary:3} relies on a finite element approach \cite{BrennerScott:2008:2,ZienkiewiczMorgan:2006:1}. 
A more detailed derivation of the conversion of the infinite--dimensional continuous system into a finite dimensional non-linear system is given in \cite{Bergermann:2019:1,WodoGanapathysubramanian:2012:1}. 
Applying the finite element method gives us the above system discretized in space and we then need to consider a temporal discretization. 
We here decide on a semi-implicit scheme where the linear part of the right hand side of \eqref{eq:chternary:1} is treated implicitly and the nonlinear terms coming from the potential are handled explicitly. 
We then obtain the following system 
\begin{subequations}
	\label{eq:chternary_space}
	\begin{alignat}{3}
		\frac{1}{\tau} \bM\paren[big](){\bphi_p^{(k+1)} - \bphi_p^{(k)}}
		&
		= 
		- \bK\bmu_p^{(k+1)} 
		&
		&
		\quad
		\text{and}
		\quad
		&
		\bM \bmu_p^{(k+1)} 
		&
		= 
		\bf_p^{(k)} + \epsilon \bK \bphi_p^{(k+1)}
		,
		\\
		\frac{1}{\tau} \bM\paren[big](){\bphi_\nfa^{(k+1)} - \bphi_\nfa^{(k)}}
		&
		= 
		-
		\bK \bmu_\nfa^{(k+1)} 
		&
		&
		\quad
		\text{and}
		\quad
		&
		\bM \bmu_\nfa^{(k+1)} 
		&
		= 
		\bf_\nfa^{(k)} + \epsilon \bK \bphi_\nfa^{(k+1)}
		,
	\end{alignat}
\end{subequations}
where $\bphi_p$, $\bphi_\nfa$ and $\bmu_p$, $\bmu_\nfa$ are the coefficient vectors representing the discretized chemical potentials and order parameters, respectively, and $\tau$ denotes the time step size, which is chosen between $10^{-7}$ and $10^{-4}$ in our numerical experiments.
As observed in \cite{Bosch:2016:1} and the references mentioned therein, explicit schemes often require very tight time step restrictions, often more so than implicit schemes, and the value of the time step is coupled to discretization and interface parameters. A more detailed investigation along with more sophisticated time-stepping is a topic of future research.
The matrices $\bM$ and $\bK$ denote standard mass and stiffness matrices arising from the spatial discretization with finite elements, with appropriate constants.
They coincide for both the polymer and the non--fullerene acceptor equations.
The indices $\cdot^{(k+1)}$ and $\cdot^{(k)}$ stand for the current and previous time step.
Also $\bf_p$, $\bf_\nfa$ are the discretized representations of $\frac{\partial f}{\partial \phi_p}$ and $\frac{\partial f}{\partial \phi_\nfa}$, respectively. 
We now collect the polymer and the non-fullerene acceptor equations as 
\begin{equation}
	\label{eq:FE_systems}
	\begin{bmatrix}
		\bM
		&
		\tau
		\bK
		\\
		- \epsilon \bK
		&
		\bM
	\end{bmatrix}
	\begin{bmatrix}
		\bphi_p^{(k+1)}
		\\
		\bmu_p^{(k+1)}
	\end{bmatrix}
	=
	\begin{bmatrix}
		\bM\bphi_p^{(k)}
		\\
		\bf_p^{(k)}
	\end{bmatrix}
	,
	\quad 
	\begin{bmatrix}
		\bM
		&
		\tau \bK
		\\
		- \epsilon \bK
		&
		\bM
	\end{bmatrix}
	\begin{bmatrix}
		\bphi_\nfa^{(k+1)}
		\\
		\bmu_\nfa^{(k+1)}
	\end{bmatrix}
	=
	\begin{bmatrix}
		\bM\bphi_\nfa^{(k)}
		\\
		\bf_\nfa^{(k)}
	\end{bmatrix}
	.
\end{equation}
We need to solve these equations repeatedly for every time step and a direct solver based on a factorization is too expensive for realistic mesh sizes in spite of the maturity of the field \cite{Davis:2004:2,DuffErismanReid:2017:1}. 
To overcome this issue we focus on iterative methods, in particular Krylov subspace methods \cite{Saad:2003:1}. 
We illustrate this on one of the systems with the other being equivalent. 
Let us consider the equivalent form of the first system in \eqref{eq:FE_systems}
\begin{equation}\label{eq:saddle_point_system}
	\begin{bmatrix}
		\bM
		&
		\tau \bK
		\\
		\tau \bK
		&
		- \frac{\tau}{\epsilon} \bM
	\end{bmatrix}
	\begin{bmatrix}
		\bphi_p^{(k+1)}
		\\
		\mu_p^{(k+1)}
	\end{bmatrix}
	=
	\begin{bmatrix}
		\bM \bphi_p^{(k)}
		\\
		- \frac{\tau}{\epsilon} \bf_p^{(k)}
	\end{bmatrix}
	,
\end{equation}
which is now a symmetric saddle point system \cite{BenziGolubLiesen:2005:1}. 
Such block systems arise in a variety of different applications such as PDE-constrained optimization or the treatment of complex symmetric linear systems. 

Motivated by \cite{MurphyGolubWathen:2000:1,ElmanSilvesterWathen:2014:1} we focus on a block-diagonal preconditioner 
\begin{equation*}
	\bP
	=
	\begin{bmatrix}
		\bM 
		&
		\bnull
		\\
		\bnull
		&
		\bS 
	\end{bmatrix}
	,
\end{equation*}
where $\bS = \frac{\tau}{\epsilon}\bM + \tau^2 \bK \bM^{-1} \bK$ is the Schur complement of the matrix 
$ \bA = \begin{bsmallmatrix} \bM & \tau \bK \\ \tau \bK & -\frac{\tau}{\epsilon} \bM \end{bsmallmatrix}$. 

We here focus on this block-diagonal matrix but other structured preconditioners for related problems have been suggested \cite{BoyanovaNeytcheva:2014:1,Bai:2010:1}, especially in the context of PDE-constrained optimization. 
They all rely on an efficient approximations of shifted stiffness matrices. 
A comparison of such preconditioning strategies can be found in \cite{AxelssonFarouqNeytcheva:2016:1}. 
Our aim is to rely on a preconditioner that can be used in a symmetric Krylov subspace method such as  \minres \cite{PaigeSaunders:1975:1}. 
For this we now focus on giving further details how to approximate the Schur-complement efficiently.

The preconditioner $\bP$ is an ideal version as it is too expensive to use in practice.
We derive a practical version of this via using the approximations $\widetilde{\bM} \approx \bM$ and $\widetilde{\bS} \approx \bS$. 
The approximation $\widetilde{\bM}$ is the simpler of the two and we will use an algebraic multigrid (AMG) \cite{Falgout:2006:1} for our implementation, but also other methods such as a Chebyshev semi-iteration \cite{WathenRees:2008:2} are possible. 
The approximation of the Schur complement is more involved. 
We here follow the matching approach of \cite{PearsonWathen:2011:1,PearsonStollWathen:2012:1} given by
\begin{equation*}
	\bS
	=
	\frac{\tau}{\epsilon} \bM + \tau^2 \bK \bM^{-1} \bK
	\approx
	\paren[big](){\tau \bK + \widehat{\bM}} \bM^{-1} \paren[big](){\tau \bK + \widehat{\bM}} 
\end{equation*}
and the condition
\begin{equation*}
	\frac{\tau}{\epsilon} \bM
	=
	\widehat{\bM} \bM^{-1} \widehat{\bM}
	.
\end{equation*}
This is obviously true for $\widehat{\bM} = \sqrt{\frac{\tau}{\epsilon}} \bM$ and then the Schur complement approximation is based on 
\begin{equation*}
\widetilde{\bS} 
\coloneqq
\paren[auto](){\tau \bK + \textstyle \sqrt{\frac{\tau}{\epsilon}} \bM} \bM^{-1} \paren[auto](){\tau \bK + \textstyle \sqrt{\frac{\tau}{\epsilon}} \bM}
.
\end{equation*}
The preconditioner requires the approximate solution of systems with $\widetilde{\bS}$, which we will realize using an algebraic multigrid approximation for $\paren[big](){\tau \bK + \sqrt{\frac{\tau}{\epsilon}}\bM}$. 
Let us motivate why this preconditioner is a sensible choice (\cite{PearsonWathen:2011:1}).
\begin{lemma}\label{lemma:eigenvalues}
	The eigenvalues of the $\widetilde{\bS}^{-1}\bS$ are contained in the interval $[\frac{1}{2},1]$, independent of all system parameters.
\end{lemma}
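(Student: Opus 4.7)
The plan is to work with the generalized eigenvalue problem $\bS x = \lambda \widetilde{\bS} x$ and bound the Rayleigh quotient $\lambda = (x^T \bS x)/(x^T \widetilde{\bS} x)$ from above and below. The first observation I would record is that expanding the factorization gives
\begin{equation*}
	\widetilde{\bS}
	=
	\tau^2 \bK \bM^{-1} \bK
	+
	2 \tau \sqrt{\tfrac{\tau}{\epsilon}} \bK
	+
	\tfrac{\tau}{\epsilon} \bM
	,
\end{equation*}
so that $\widetilde{\bS} - \bS = 2\tau\sqrt{\tau/\epsilon}\,\bK$. Since $\bK$ is symmetric positive semidefinite, this immediately yields $x^T \widetilde{\bS} x \geq x^T \bS x$ for all $x$, proving that every eigenvalue satisfies $\lambda \leq 1$.

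For the lower bound I would introduce the three nonnegative quantities $a = x^T \bM x$, $b = x^T \bK x$, $c = x^T \bK \bM^{-1} \bK x$. The key algebraic tool is the Cauchy--Schwarz inequality in the $\bM^{-1}$-inner product applied to the vectors $\bM^{1/2} x$ and $\bM^{-1/2} \bK x$, which gives $b^2 \leq a c$. Substituting $\alpha = \sqrt{\tau/\epsilon}\,\sqrt{a}$ and $\beta = \tau \sqrt{c}$, the Rayleigh quotient becomes
\begin{equation*}
	\lambda
	=
	\frac{\alpha^2 + \beta^2}{\alpha^2 + \beta^2 + 2\alpha\beta\,\gamma}
	,
	\qquad
	\gamma
	\coloneqq
	\frac{b}{\sqrt{ac}} \in [0,1]
	.
\end{equation*}
Since $\gamma \leq 1$, the denominator is bounded by $(\alpha + \beta)^2$, and the elementary inequality $(\alpha + \beta)^2 \leq 2(\alpha^2 + \beta^2)$ forces $\lambda \geq \frac{1}{2}$.

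Combining the two bounds gives $\lambda \in [\frac{1}{2}, 1]$, uniformly in $\tau$, $\epsilon$, and the mesh parameters (which are hidden inside $\bM$ and $\bK$). The main obstacle, such as it is, is the lower bound: one has to identify the right change of variables so that the Cauchy--Schwarz estimate $b \leq \sqrt{ac}$ can be combined with the scalar inequality $(\alpha+\beta)^2 \leq 2(\alpha^2+\beta^2)$ in a parameter-independent way. Once the substitution $\alpha, \beta$ is in place, the bound is worst when $\gamma = 1$ and $\alpha = \beta$, so the constant $\frac{1}{2}$ is in fact sharp. A minor technical point worth checking is that $\widetilde{\bS}$ is invertible on the space considered; this follows from the positive definiteness of $\bM$ together with the semidefiniteness of $\bK$, so the generalized eigenvalue problem is well-posed.
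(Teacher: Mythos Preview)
Your argument is correct and follows essentially the same Rayleigh-quotient strategy as the paper. The paper's presentation is marginally more compact: it introduces the \emph{vectors} $a = \tau \bM^{-1/2}\bK v$ and $b = \sqrt{\tau/\epsilon}\,\bM^{1/2} v$, so that the quotient becomes $(a^\transp a + b^\transp b)/(a^\transp a + b^\transp b + 2a^\transp b)$ and both bounds follow at once from $0 \le 2a^\transp b \le a^\transp a + b^\transp b$, whereas your scalarized version splits this into a Cauchy--Schwarz step ($\gamma \le 1$) and an AM--GM step---but the content is identical.
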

\begin{proof}
	To show this we consider the Rayleigh quotient
	\begin{equation*}
		\frac{v^\transp \bS \, v}{v^\transp \widetilde{\bS} \, v}
		=
		\frac{v^\transp \paren[auto](){\frac{\tau}{\epsilon}\bM + \tau^2 \bK \bM^{-1} \bK} v}{v^\transp \paren[big](){\tau \bK + \sqrt{\frac{\tau}{\epsilon}} \bM} \bM^{-1} \paren[big](){\tau \bK + \sqrt{\frac{\tau}{\epsilon}} \bM} \, v}
		=
		\frac{a^\transp a + b^\transp b}{a^\transp a + b^\transp b + 2a^\transp b}
	\end{equation*}
	with $a = \tau \bM^{-1/2} \bK v$ and $b = \sqrt{\frac{\tau}{\epsilon}} \bM^{1/2} v$ and $v \neq 0$.
	We know that $0 \le \norm{a-b}^2 = (b-a)^\transp (b-a) = b^\transp b + a^\transp a - 2b^\transp a$ and as a result $2b^\transp a \le b^\transp b + a^\transp a$. 
	This gives the lower bound of $\frac{1}{2}$ for the eigenvalues. 
	The upper bound of $1$ can now be obtained if $b^\transp a \ge 0$. 
	Looking at this term in detail we see $b^\transp a = \tau \sqrt{\frac{\tau}{\epsilon}} v^\transp \bK \, v$, which is obviously positive given the property of the stiffness matrix. 
	We have thus obtained the eigenvalue bound, which is independent of the system parameters like $\epsilon$ as well as time step and mesh size.
\end{proof}
As a results we now obtain the practical version of our preconditioner as shown in \cref{algorithm:precon}. 

\begin{algorithm}
	\caption{Preconditioner}\label{algorithm:precon}
	\begin{algorithmic}[1]
		\Procedure{Prec}{$v$}
		\Comment{Application of preconditioner to block vector $v$}
		\State Approximately solve $\bM w_1 = v_1$ via AMG
		\State Approximately solve $\paren[big](){\tau \bK + \sqrt{\frac{\tau}{\epsilon}} \bM}w_2 = v_2$ via AMG
		\State Compute $w_2 \gets \bM w_2$
		\State Approximately solve $\paren[big](){\tau \bK + \sqrt{\frac{\tau}{\epsilon}} \bM} w_2 = w_2$ via AMG
		\State \textbf{return} $w$
		\Comment{Preconditioned block-vector $w$}
		\EndProcedure
	\end{algorithmic}
\end{algorithm}

\section{Numerical Experiments}
\label{section:numerical_experiments}

We solve the model \eqref{eq:chternary:1}--\eqref{eq:chternary:3} with a \python implementation using the finite element libraries \namemd{DOLFINx}\footnote{\url{https://github.com/FEniCS/dolfinx}} and \ufl \cite{AlnaesLoggOlgaardRognesWells:2014:1,AlnaesLoggMardal:2012:1} from the \fenics project \cite{AlnaesBlechtaHakeJohanssonKehletLoggRichardsonRingRognesWells:2015:1,AlnaesLoggMardal:2012:1} (latest software versions as of January~2022).
Codes that reproduce the numerical experiments presented in this section are publicly available.\footnote{\url{https://github.com/KBergermann/Precond-Cahn-Hilliard-OSC}}

We generate uniform triangulations of $2$- and $3$-dimensional rectangular domains of size $10 \times 2.5$ and $10 \times 2.5 \times 10$, respectively, with a variable number of grid points $n_x \times n_y$ and $n_x \times n_y \times n_z$ and choose linear triangular Lagrange elements. 
While we here use only regular meshes the preconditioning strategy proposed in this paper remains applicable for different meshes such as the ones obtained from an adaptive finite element scheme.
Note that the $y$-coordinate denotes the direction of the height of the film.

Boundary conditions on the top boundary $\Gamma_t$, \ie, $y = y_{\textup{max}}$ as well as the bottom (substrate) boundary $\Gamma_b$, \ie, $y = 0$ are implemented via \url surface integral measures.
On $\Gamma_b$, we enable space-dependent substrate patterning.
If activated, we have
\begin{equation*}
p_p(x) =
\begin{cases}
	0
	, 
	& 
	x \in [\frac{1}{6} x_{\textup{max}}, \frac{1}{3} x_{\textup{max}}] 
	\cup 
	[\frac{1}{2} x_{\textup{max}}, \frac{2}{3} x_{\textup{max}}] 
	\cup 
	[\frac{5}{6} x_{\textup{max}}, x_{\textup{max}}]
	,
	\\
	1
	, 
	& 
	x \in [0, \frac{1}{6} x_{\textup{max}}] 
	\cup 
	[\frac{1}{3} x_{\textup{max}}, \frac{1}{2} x_{\textup{max}}] 
	\cup 
	[\frac{2}{3} x_{\textup{max}}, \frac{5}{6} x_{\textup{max}}]
\end{cases}
\end{equation*}
and
\begin{equation*}
	p_\nfa(x) 
	=
	\begin{cases}
		0
		, 
		& 
		x \in [0, \frac{1}{6} x_{\textup{max}}] 
		\cup 
		[\frac{1}{3} x_{\textup{max}}, \frac{1}{2} x_{\textup{max}}] 
		\cup 
		[\frac{2}{3} x_{\textup{max}}, \frac{5}{6} x_{\textup{max}}]
		,
		\\
		1
		, 
		& 
		x \in [\frac{1}{6} x_{\textup{max}}, \frac{1}{3} x_{\textup{max}}] 
		\cup 
		[\frac{1}{2} x_{\textup{max}}, \frac{2}{3} x_{\textup{max}}] 
		\cup 
		[\frac{5}{6} x_{\textup{max}}, x_{\textup{max}}]
	\end{cases}
\end{equation*}
for $p_p(x)$ and $p_\nfa(x)$ in \eqref{eq:surfenergy}, \ie, polymer and non-fullerene acceptor preference alternates on equispaced sub-intervals of $\Gamma_b$.
In the absence of substrate patterning, we set $p_p(x) = p_\nfa(x) = 1$ for all $x\in\Gamma_b$.
The remaining parameters in \eqref{eq:surfenergy} are chosen as $g_p = g_\nfa = 0.01$ and $h_p = h_\nfa = 0$.
Furthermore, we choose the evaporation rate of solvent at the top boundary $\Gamma_t$, which is modeled by an inward flow of polymer and non-fullerene acceptor into the system as $k = 5 \cdot 10^{-3}$.
The Flory--Huggins parameters are chosen as $\chi_{p,\nfa} = 1, \chi_{p,s} = \chi_{\nfa,s} = 0.3$, the employed degrees of polymerization are $N_p = N_\nfa = 20$ and $N_s = 1$, and the interface parameters were chosen as $\epsilon_p = \epsilon_\nfa = 10^{-3}$.
These parameter choices are motivated by \cite{WodoGanapathysubramanian:2012:1}.

As initial conditions, we set polymer and non-fullerene acceptor concentrations to $\phi_p = \phi_\nfa = 0.35 \pm 0.01$, where $\pm 0.01$ denotes uniformly distributed random fluctuations.
These induce non-zero concentration gradients at time $t=0$, which are required for the initiation of phase separation.
Note that our model described in \Cref{section:model} omits stochasticity in the equations \eqref{eq:chternary:1}, which could be included to model noise, \cf, \eg, \cite{WodoGanapathysubramanian:2012:1}.

As in \cite{Bergermann:2019:1}, we replace the logarithmic bulk energy term \eqref{eq:logarithmicpotential} by the polynomial approximation
\begin{equation}\label{eq:polynomial_potential}
	f(\phi_p, \phi_\nfa, \phi_s) 
	= 
	3.5 \, \phi_p^2 \phi_\nfa^2 + 0.1 \, \phi_s^2
	,
\end{equation}
to allow for numerical stability in case concentrations become slightly negative due to discretization errors, which would lead to a numerical breakdown in the evaluation of the logarithmic terms.

After the linear systems \eqref{eq:FE_systems} have been assembled for both the polymer and non-fullerene acceptor equation by means of suitable \ufl expressions, all matrices are converted into \texttt{scipy.sparse} format \cite{VirtanenGommersOliphantHaberlandReddyCournapeauBurovskiPetersonWeckesserBrightVanDerWaltBrettWilsonMillmanMayorovNelsonJonesKernLarsonCareyPolatFengMooreVanderPlasLaxaldePerktoldCimrmanHenriksenQuinteroHarrisArchibaldRibeiroPedregosaVanMulbregt:2020:1}.
The AMG preconditioning steps in \Cref{algorithm:precon} are realized by the Ruge--Stüben implementation of the \python package \pyamg \cite{OlSc2018}.
\Cref{algorithm:precon} is then passed as preconditioner to the preconditioned \minres \cite{PaigeSaunders:1975:1} implementation of \scipy \cite{VirtanenGommersOliphantHaberlandReddyCournapeauBurovskiPetersonWeckesserBrightVanDerWaltBrettWilsonMillmanMayorovNelsonJonesKernLarsonCareyPolatFengMooreVanderPlasLaxaldePerktoldCimrmanHenriksenQuinteroHarrisArchibaldRibeiroPedregosaVanMulbregt:2020:1}.

\Cref{figure:minres_iterations_runtime,figure:minres_iterations_runtime_1000_iterations,figure:minres_iterations_runtime_3d,figure:minres_iterations_runtime_varying_epsilon,figure:minres_iterations_runtime_varying_dt} illustrate the required number of \minres iterations as well as the runtime required to solve one preconditioned linear system of the form \eqref{eq:saddle_point_system} to a tolerance of $10^{-7}$ in terms of the concentrations $\phi_p, \phi_\nfa, \phi_s\in[0,1]$.
All numerical experiments were performed on an AMD Ryzen~5 5600X 6-Core processor with 16~GiB RAM.
The experiments corroborate the theoretical result from \Cref{lemma:eigenvalues}, \ie, the independence of the preconditioner from \Cref{algorithm:precon} of system parameters as well as time step and mesh sizes.

More specifically, \Cref{figure:minres_iterations_runtime,figure:minres_iterations_runtime_3d} indicate relatively constant \minres iteration numbers as well as an almost perfect linear scaling of the runtime in the first $20$ time steps for $2$- and $3$-dimensional meshes of different fineness.
In the 2D case, \Cref{figure:minres_iterations_runtime_1000_iterations} shows that very similar observations hold true after $1000$~time steps have been completed.
Furthermore, \Cref{figure:minres_iterations_runtime_varying_epsilon,figure:minres_iterations_runtime_varying_dt} illustrate that \minres iterations and runtimes are similar for different values of the interface parameters $\epsilon_p$ and $\epsilon_\nfa$ as well as the chosen time step size $\tau$ for parameter ranges that permit stable numerical solutions.

In order to numerically verify the theoretically indicated linear convergence order of the IMEX scheme we ran our solver for five different time step sizes between $\tau=2 \cdot 10^{-4}$ and $3.2 \cdot 10^{-3}$.
Comparing the results to a relatively fine solution computed with $\tau=10^{-5}$ we obtained an experimental order of convergence of $0.789$ for the polymer and $0.788$ for the non-fullerene acceptor equation.

Finally, \Cref{figure:2d_morphologies,figure:3d_morphologies} show simulation results for both the $2$- and $3$-dimensional case.
Details on the parameter choices are given in the respective captions.

\begin{figure}[htp]
	\subfloat[\minres iterations]{
		\begin{tikzpicture}[baseline]
			\begin{axis}[
				xlabel={time step},
				ylabel={\# iterations},
				width=0.45\linewidth,
				height=150pt,
				legend style={at={(0.4, -0.3)}, anchor=north, legend columns=3},
				]
				
				\addplot[-red!90, mark=o] coordinates {
					(1, 14) (2, 14) (3, 14) (4, 14) (5, 15) (6, 15) (7, 15) (8, 15) (9, 15) (10, 15) (11, 15) (12, 15) (13, 15) (14, 15) (15, 15) (16, 15) (17, 15) (18, 15) (19, 15) (20, 15)
				};
				\addlegendentry{$100 \times 50$}
				
				\addplot[-red!60!green!60, mark=x] coordinates {
					(1, 14) (2, 14) (3, 14) (4, 14) (5, 14) (6, 14) (7, 14) (8, 14) (9, 14) (10, 14) (11, 14) (12, 14) (13, 14) (14, 14) (15, 14) (16, 14) (17, 14) (18, 14) (19, 14) (20, 14)
				};
				\addlegendentry{$200 \times 100$}
				
				\addplot[-red!60!green!30, mark=square] coordinates {
					(1, 14) (2, 14) (3, 14) (4, 14) (5, 14) (6, 14) (7, 14) (8, 14) (9, 14) (10, 14) (11, 14) (12, 14) (13, 14) (14, 14) (15, 14) (16, 14) (17, 14) (18, 14) (19, 14) (20, 14)
				};
				\addlegendentry{$400 \times 200$}
				
				\addplot[red!60!green!30, mark=o] coordinates {
					(1, 12) (2, 12) (3, 12) (4, 12) (5, 12) (6, 12) (7, 12) (8, 12) (9, 12) (10, 13) (11, 13) (12, 13) (13, 13) (14, 13) (15, 13) (16, 13) (17, 13) (18, 13) (19, 13) (20, 13)
				};
				\addlegendentry{$800 \times 400$}
				
				\addplot[red!60!green!60, mark=x] coordinates {
					(1, 12) (2, 12) (3, 11) (4, 11) (5, 11) (6, 11) (7, 11) (8, 11) (9, 11) (10, 11) (11, 11) (12, 11) (13, 11) (14, 11) (15, 12) (16, 12) (17, 12) (18, 12) (19, 12) (20, 12)
				};
				\addlegendentry{$1600 \times 800$}
				
				\addplot[red!90, mark=square] coordinates {
					(1, 10) (2, 10) (3, 10) (4, 10) (5, 10) (6, 10) (7, 10) (8, 10) (9, 10) (10, 10) (11, 10) (12, 10) (13, 10) (14, 10) (15, 10) (16, 10) (17, 10) (18, 10) (19, 10) (20, 10)
				};
				\addlegendentry{$3200 \times 1600$}
				
			\end{axis}
		\end{tikzpicture}
	}
	\hfill
	\subfloat[\minres runtimes]{
		\begin{tikzpicture}[baseline]
			\begin{axis}[
				xmode=log,
				ymode=log,
				xlabel={degrees of freedom},
				ylabel={runtime in seconds},
				width=0.45\linewidth,
				height=150pt,
				]
				
				\addplot[black,mark=square,mark size=2,mark options={solid}] coordinates {
					(5000, 0.089) (20000, 0.23) (80000, 1.03) (320000, 4.06) (1280000, 18.8) (5120000, 84.3)
				};
				
			\end{axis}
		\end{tikzpicture}
	}
	\caption{%
		\minres iteration numbers and runtimes per time step for different spatial discretizations of a rectangular 2D domain ($n_x \times n_y$ denotes the number of spatial discretization points in $x$ and $y$ direction, respectively).
		\minres tolerance $10^{-7}$, AMG preconditioner tolerance $10^{-4}$, scaled time step size $\tau = 10^{-4}$, interface parameters $\epsilon_\nfa = \epsilon_p = 10^{-3}$.
		All numbers indicate the worst observed case, \ie, the larger number of iterations of the two (polymer and non-fullerene acceptor) equations and the longest observed runtime of both equations over the first $20$ time steps.
	}
	\label{figure:minres_iterations_runtime}
\end{figure}
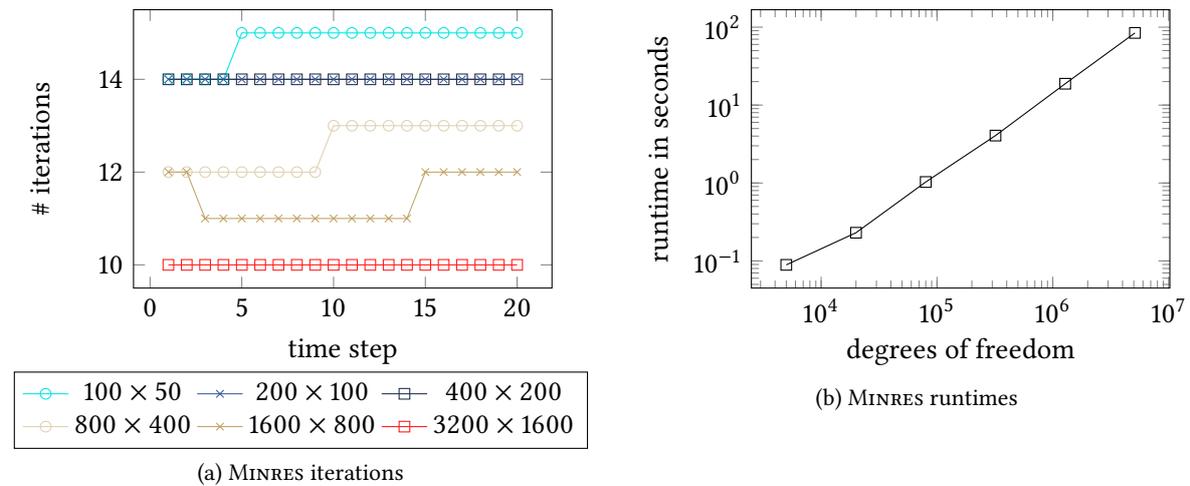

\begin{figure}[htp]
	\subfloat[\minres iterations]{
		\begin{tikzpicture}[baseline]
		\begin{axis}[
		xlabel={time step},
		ylabel={\# iterations},
		width=0.45\linewidth,
		height=150pt,
		legend style={at={(0.4, -0.3)}, anchor=north, legend columns=3},
		]
		
		\addplot[-red!90, mark=o] coordinates {
			(1, 15) (2, 15) (3, 15) (4, 15) (5, 15) (6, 15) (7, 15) (8, 15) (9, 15) (10, 15) (11, 15) (12, 15) (13, 15) (14, 15) (15, 15) (16, 15) (17, 15) (18, 15) (19, 15) (20, 15)
		};
		\addlegendentry{$100 \times 50$}
		
		\addplot[-red!60!green!60, mark=x] coordinates {
			(1, 16) (2, 16) (3, 16) (4, 16) (5, 16) (6, 16) (7, 16) (8, 16) (9, 16) (10, 16) (11, 16) (12, 16) (13, 16) (14, 16) (15, 16) (16, 16) (17, 16) (18, 16) (19, 16) (20, 16)
		};
		\addlegendentry{$200 \times 100$}
		
		\addplot[-red!60!green!30, mark=square] coordinates {
			(1, 15) (2, 15) (3, 15) (4, 15) (5, 15) (6, 15) (7, 15) (8, 15) (9, 15) (10, 15) (11, 15) (12, 15) (13, 15) (14, 15) (15, 15) (16, 15) (17, 15) (18, 15) (19, 15) (20, 15)
		};
		\addlegendentry{$400 \times 200$}
		
		\addplot[red!60!green!30, mark=o] coordinates {
			(1, 14) (2, 14) (3, 14) (4, 14) (5, 14) (6, 14) (7, 14) (8, 14) (9, 14) (10, 14) (11, 14) (12, 14) (13, 14) (14, 14) (15, 14) (16, 14) (17, 14) (18, 14) (19, 14) (20, 14)
		};
		\addlegendentry{$800 \times 400$}
		
		\addplot[red!60!green!60, mark=x] coordinates {
			(1, 13) (2, 13) (3, 13) (4, 13) (5, 13) (6, 13) (7, 13) (8, 13) (9, 13) (10, 13) (11, 13) (12, 13) (13, 13) (14, 13) (15, 13) (16, 13) (17, 13) (18, 13) (19, 13) (20, 13)
		};
		\addlegendentry{$1600 \times 800$}

		\end{axis}
		\end{tikzpicture}
	}
	\hfill
	\subfloat[\minres runtimes]{
		\begin{tikzpicture}[baseline]
		\begin{axis}[
		xmode=log,
		ymode=log,
		xlabel={degrees of freedom},
		ylabel={runtime in seconds},
		width=0.45\linewidth,
		height=150pt,
		]
		
		\addplot[black,mark=square,mark size=2,mark options={solid}] coordinates {
			(5000, 0.086) (20000, 0.27) (80000, 1.06) (320000, 4.42) (1280000, 20.3) % (5120000, 84.3)
		};
		
		\end{axis}
		\end{tikzpicture}
	}
	\caption{%
		\minres iteration numbers and runtimes per time step after $1000$ time steps, \ie, time step $1$ in panel (a) denotes the $1001$th overall time step.
		Otherwise, setup and parameters are the same as in \Cref{figure:minres_iterations_runtime}.
	}
	\label{figure:minres_iterations_runtime_1000_iterations}
\end{figure}
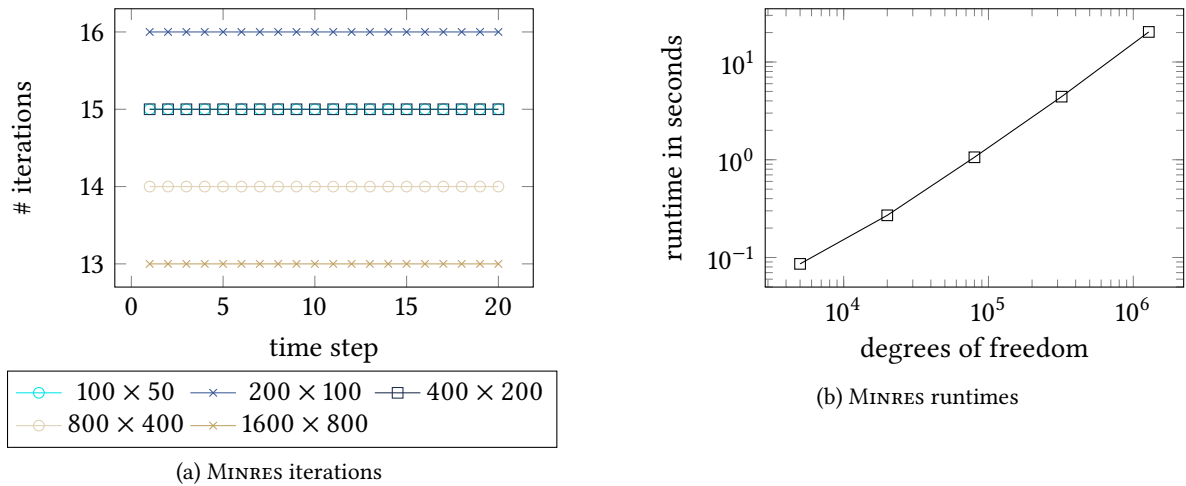

\begin{figure}[htp]
	\subfloat[\minres iterations]{
		\begin{tikzpicture}[baseline]
		\begin{axis}[
		xlabel={time step},
		ylabel={\# iterations},
		width=0.45\linewidth,
		height=150pt,
		legend style={at={(0.4, -0.3)}, anchor=north, legend columns=2},
		]
		
		\addplot[-red!90, mark=o] coordinates {
			(1, 8) (2, 8) (3, 8) (4, 8) (5, 8) (6, 8) (7, 8) (8, 8) (9, 8) (10, 8) (11, 8) (12, 8) (13, 8) (14, 8) (15, 8) (16, 8) (17, 8) (18, 8) (19, 8) (20, 8)
		};
		\addlegendentry{$30 \times 15 \times 30$}
		
		\addplot[-red!60!green!60, mark=x] coordinates {
			(1, 10) (2, 8) (3, 8) (4, 8) (5, 8) (6, 8) (7, 8) (8, 8) (9, 8) (10, 8) (11, 8) (12, 8) (13, 8) (14, 8) (15, 8) (16, 8) (17, 8) (18, 8) (19, 8) (20, 9)
		};
		\addlegendentry{$40 \times 20 \times 40$}
		
		\addplot[-red!60!green!30, mark=square] coordinates {
			(1, 12) (2, 11) (3, 10) (4, 10) (5, 10) (6, 10) (7, 10) (8, 10) (9, 10) (10, 10) (11, 10) (12, 11) (13, 11) (14, 11) (15, 11) (16, 11) (17, 11) (18, 11) (19, 11) (20, 11)
		};
		\addlegendentry{$60 \times 30 \times 60$}
		
		\addplot[red!60!green!30, mark=o] coordinates {
			(1, 12) (2, 11) (3, 11) (4, 11) (5, 11) (6, 11) (7, 11) (8, 11) (9, 11) (10, 11) (11, 11) (12, 11) (13, 11) (14, 11) (15, 11) (16, 11) (17, 11) (18, 11) (19, 11) (20, 11)
		};
		\addlegendentry{$80 \times 40 \times 80$}
		
		\addplot[red!60!green!60, mark=x] coordinates {
			(1, 12) (2, 11) (3, 11) (4, 12) (5, 12) (6, 12) (7, 12) (8, 12) (9, 12) (10, 12) (11, 12) (12, 12) (13, 12) (14, 12) (15, 12) (16, 12) (17, 12) (18, 12) (19, 12) (20, 12)
		};
		\addlegendentry{$120 \times 60 \times 120$}
		
		\addplot[red!90, mark=square] coordinates {
			(1, 12) (2, 12) (3, 12) (4, 12) (5, 12) (6, 12) (7, 12) (8, 12) (9, 12) (10, 12) (11, 12) (12, 12) (13, 12) (14, 12) (15, 12) (16, 12) (17, 12) (18, 12) (19, 12) (20, 12)
		};
		\addlegendentry{$160 \times 80 \times 160$}
		
		\end{axis}
		\end{tikzpicture}
	}
	\hfill
	\subfloat[\minres runtimes]{
		\begin{tikzpicture}[baseline]
		\begin{axis}[
		xmode=log,
		ymode=log,
		xlabel={degrees of freedom},
		ylabel={runtime in seconds},
		width=0.45\linewidth,
		height=150pt,
		]
		
		\addplot[black,mark=square,mark size=2,mark options={solid}] coordinates {
			(13500, 0.136) (32000, 0.32) (108000, 1.43) (256000, 3.16) (864000, 13.3) (2048000, 30.5)
		};
		
		\end{axis}
		\end{tikzpicture}
	}
	\caption{%
		\minres iteration numbers and runtimes per time step for different spatial discretizations of a rectangular 3D domain ($n_x \times n_y \times n_z$ denotes the number of spatial discretization points in $x$, $y$, and $z$ direction, respectively).
		Otherwise, setup and parameters are the same as in \Cref{figure:minres_iterations_runtime}.
	}
	\label{figure:minres_iterations_runtime_3d}
\end{figure}
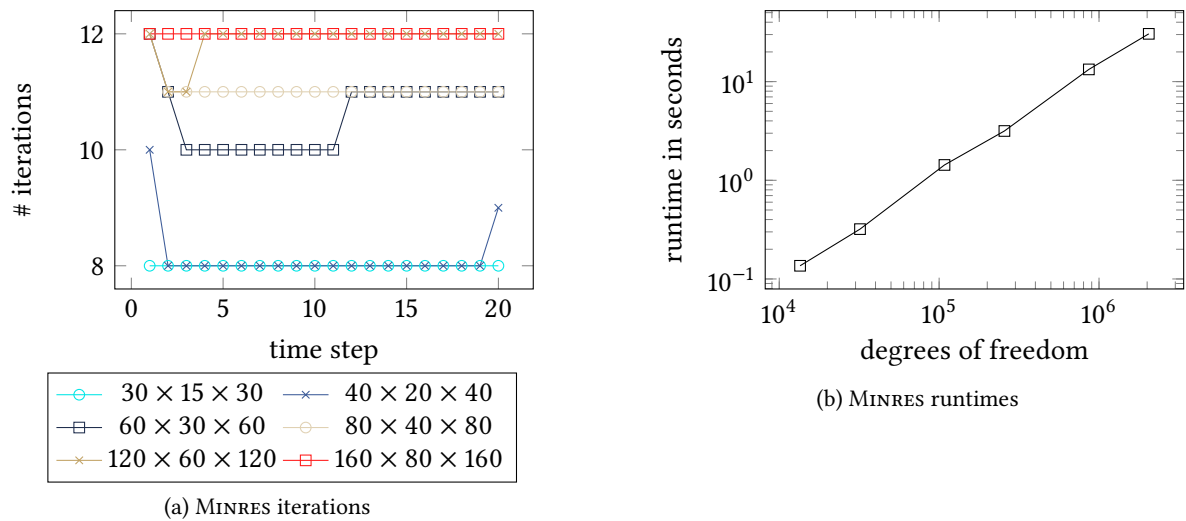

\begin{figure}[htp]
	\subfloat[\minres iterations]{
		\begin{tikzpicture}[baseline]
		\begin{axis}[
		xlabel={time step},
		ylabel={\# iterations},
		width=0.45\linewidth,
		height=150pt,
		legend style={at={(0.4, -0.3)}, anchor=north, legend columns=2},
		]
				
		\addplot[-red!90, mark=o] coordinates {
			(1, 14) (2, 15) (3, 16) (4, 16) (5, 16) (6, 16) (7, 16) (8, 17) (9, 15) (10, 1) (11, 1) (12, 1) (13, 1) (14, 1) (15, 1) (16, 1) (17, 1) (18, 1) (19, 1) (20, 1)
		};
		\addlegendentry{$\epsilon_\nfa = \epsilon_p=10^{-4}$}
		
		\addplot[-red!60!green!60, mark=x] coordinates {
			(1, 12) (2, 12) (3, 12) (4, 12) (5, 12) (6, 12) (7, 12) (8, 12) (9, 12) (10, 13) (11, 13) (12, 13) (13, 13) (14, 13) (15, 13) (16, 13) (17, 13) (18, 13) (19, 13) (20, 13)
		};
		\addlegendentry{$\epsilon_\nfa = \epsilon_p=10^{-3}$}
		
		\addplot[-red!60!green!30, mark=square] coordinates {
			(1, 10) (2, 10) (3, 10) (4, 10) (5, 10) (6, 10) (7, 10) (8, 10) (9, 10) (10, 10) (11, 10) (12, 10) (13, 10) (14, 10) (15, 10) (16, 10) (17, 10) (18, 10) (19, 10) (20, 10)
		};
		\addlegendentry{$\epsilon_\nfa = \epsilon_p=10^{-2}$}
		
		\addplot[red!60!green!30, mark=o] coordinates {
			(1, 9) (2, 8) (3, 8) (4, 8) (5, 8) (6, 8) (7, 8) (8, 8) (9, 8) (10, 8) (11, 8) (12, 8) (13, 8) (14, 8) (15, 8) (16, 8) (17, 8) (18, 8) (19, 8) (20, 8)
		};
		\addlegendentry{$\epsilon_\nfa = \epsilon_p=10^{-1}$}
		
		\addplot[red!60!green!60, mark=x] coordinates {
			(1, 8) (2, 6) (3, 6) (4, 6) (5, 5) (6, 5) (7, 5) (8, 5) (9, 5) (10, 5) (11, 5) (12, 5) (13, 5) (14, 5) (15, 5) (16, 5) (17, 5) (18, 5) (19, 5) (20, 5)
		};
		\addlegendentry{$\epsilon_\nfa = \epsilon_p=1$}
		
		\addplot[red!90, mark=square] coordinates {
			(1, 7) (2, 4) (3, 4) (4, 4) (5, 4) (6, 4) (7, 4) (8, 4) (9, 3) (10, 3) (11, 3) (12, 3) (13, 3) (14, 3) (15, 3) (16, 3) (17, 3) (18, 3) (19, 3) (20, 3)
		};
		\addlegendentry{$\epsilon_\nfa = \epsilon_p=10$}
		
		\end{axis}
		\end{tikzpicture}
	}
	\hfill
	\subfloat[\minres runtimes]{
		\begin{tikzpicture}[baseline]
		\begin{axis}[
		xmode=log,
%		ymode=log,
		xlabel={$\epsilon_p = \epsilon_\nfa$},
		ylabel={runtime in seconds},
		width=0.45\linewidth,
		height=150pt,
		]
		
		\addplot[black,mark=square,mark size=2,mark options={solid}] coordinates {
			(0.0001, 4.82) (0.001, 4.06) (0.01, 3.35) (0.1, 3.49) (1, 3.92) (10, 4.15)
		};
		
		\end{axis}
		\end{tikzpicture}
	}
	\caption{%
		\minres iteration numbers and runtimes per time step for a spatial discretizations of a rectangular 2D domain of $800 \times 400$ ($n_x \times n_y$ denotes the number of spatial discretization points in $x$ and $y$ direction, respectively) with varying interface parameters $\epsilon_\nfa = \epsilon_p$.
		Otherwise, setup and parameters are the same as in \Cref{figure:minres_iterations_runtime}.
		Note that for $\epsilon_\nfa = \epsilon_p=10^{-4}$ the concentrations $\phi_p$ and $\phi_\nfa$ diverge.
	}
	\label{figure:minres_iterations_runtime_varying_epsilon}
\end{figure}
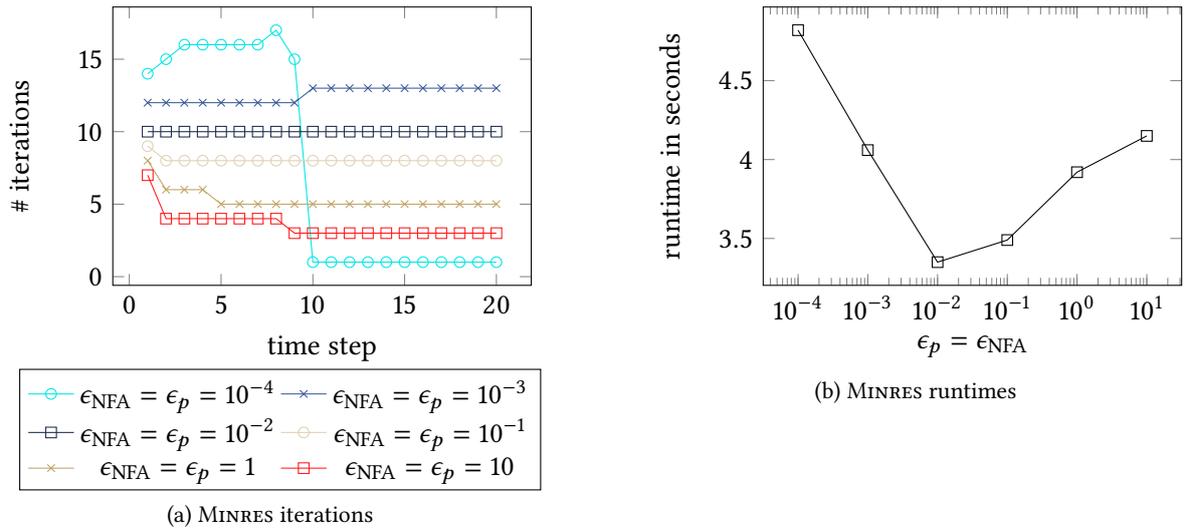

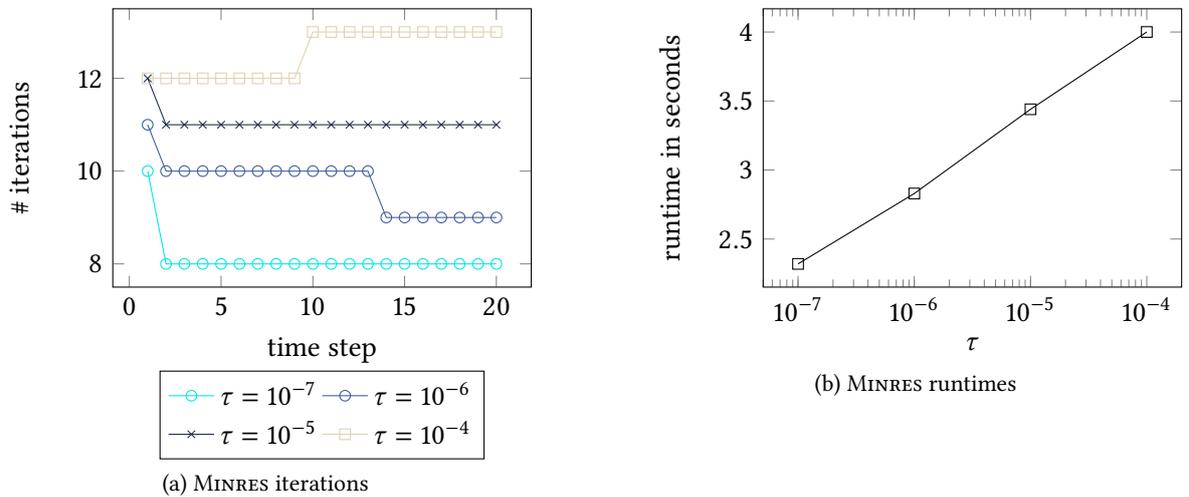
\begin{figure}[htp]
	\subfloat[\minres iterations]{
		\begin{tikzpicture}[baseline]
		\begin{axis}[
		xlabel={time step},
		ylabel={\# iterations},
		width=0.45\linewidth,
		height=150pt,
		legend style={at={(0.5, -0.3)}, anchor=north, legend columns=2},
		]
		
		\addplot[-red!90, mark=o] coordinates {
			(1, 10) (2, 8) (3, 8) (4, 8) (5, 8) (6, 8) (7, 8) (8, 8) (9, 8) (10, 8) (11, 8) (12, 8) (13, 8) (14, 8) (15, 8) (16, 8) (17, 8) (18, 8) (19, 8) (20, 8)
		};
		\addlegendentry{$\tau=10^{-7}$}
		
		\addplot[-red!60!green!60, mark=o] coordinates {
			(1, 11) (2, 10) (3, 10) (4, 10) (5, 10) (6, 10) (7, 10) (8, 10) (9, 10) (10, 10) (11, 10) (12, 10) (13, 10) (14, 9) (15, 9) (16, 9) (17, 9) (18, 9) (19, 9) (20, 9)
		};
		\addlegendentry{$\tau=10^{-6}$}
		
		\addplot[-red!60!green!30, mark=x] coordinates {
			(1, 12) (2, 11) (3, 11) (4, 11) (5, 11) (6, 11) (7, 11) (8, 11) (9, 11) (10, 11) (11, 11) (12, 11) (13, 11) (14, 11) (15, 11) (16, 11) (17, 11) (18, 11) (19, 11) (20, 11)
		};
		\addlegendentry{$\tau=10^{-5}$}
		
		\addplot[red!60!green!30, mark=square] coordinates {
			(1, 12) (2, 12) (3, 12) (4, 12) (5, 12) (6, 12) (7, 12) (8, 12) (9, 12) (10, 13) (11, 13) (12, 13) (13, 13) (14, 13) (15, 13) (16, 13) (17, 13) (18, 13) (19, 13) (20, 13)
		};
		\addlegendentry{$\tau=10^{-4}$}
		
		\end{axis}
		\end{tikzpicture}
	}
	\hfill
	\subfloat[\minres runtimes]{
		\begin{tikzpicture}[baseline]
		\begin{axis}[
		xmode=log,
		%		ymode=log,
		xlabel={$\tau$},
		ylabel={runtime in seconds},
		width=0.45\linewidth,
		height=150pt,
		]
		
		\addplot[black,mark=square,mark size=2,mark options={solid}] coordinates {
			(0.0000001, 2.32) (0.000001, 2.83) (0.00001, 3.44) (0.0001, 4.0)
		};
		
		\end{axis}
		\end{tikzpicture}
	}
	\caption{%
		\minres iteration numbers and runtimes per time step for a spatial discretizations of a rectangular 2D domain of $800 \times 400$ ($n_x \times n_y$ denotes the number of spatial discretization points in $x$ and $y$ direction, respectively) with varying scaled time step size $\tau$.
		Otherwise, setup and parameters are the same as in \Cref{figure:minres_iterations_runtime}.
	}
	\label{figure:minres_iterations_runtime_varying_dt}
\end{figure}

\begin{figure}[htp]
	\subfloat[]{
		\includegraphics[width=.45\textwidth]{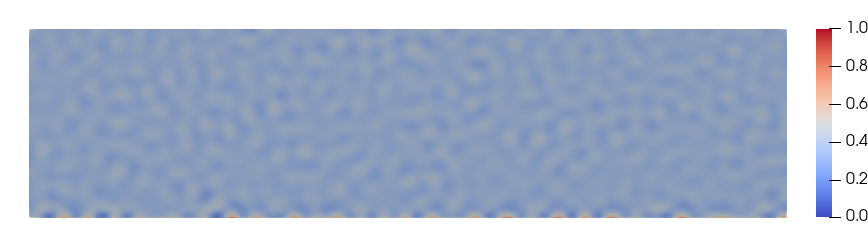}
	}
	\hfill
	\subfloat[]{
		\includegraphics[width=.45\textwidth]{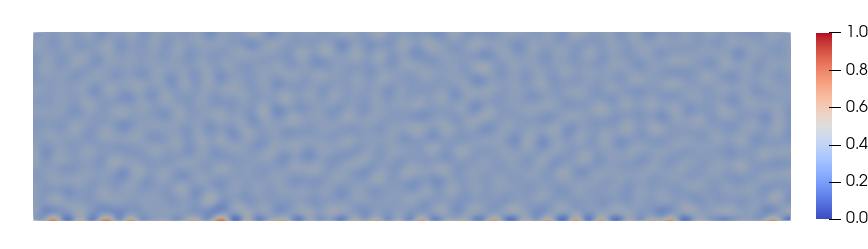}
	}
	
	\subfloat[]{
		\includegraphics[width=.45\textwidth]{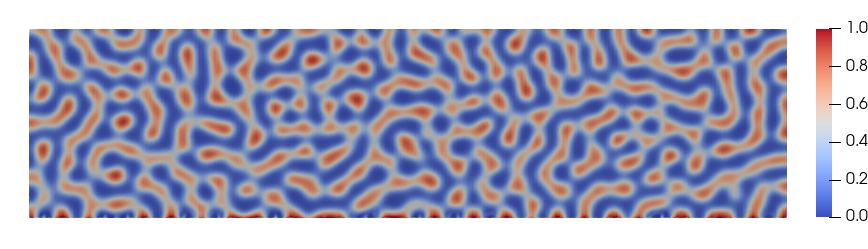}
	}
	\hfill
	\subfloat[]{
		\includegraphics[width=.45\textwidth]{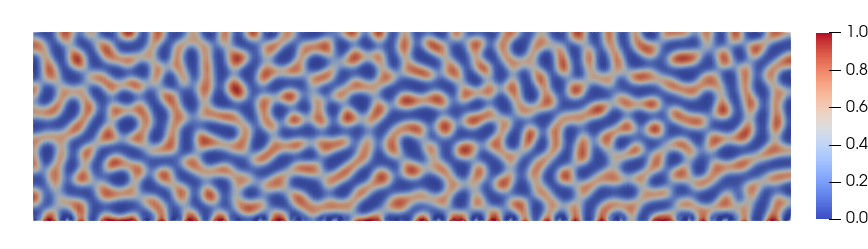}
	}
	
	\subfloat[]{
		\includegraphics[width=.45\textwidth]{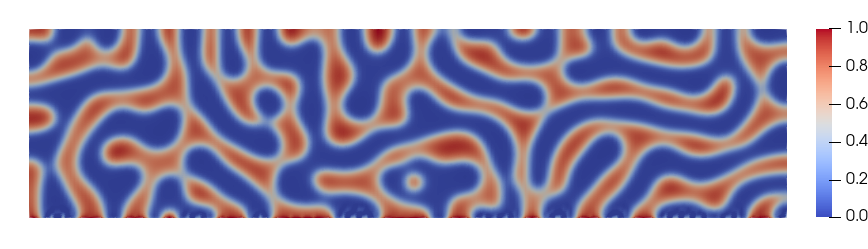}
	}
	\hfill
	\subfloat[]{
		\includegraphics[width=.45\textwidth]{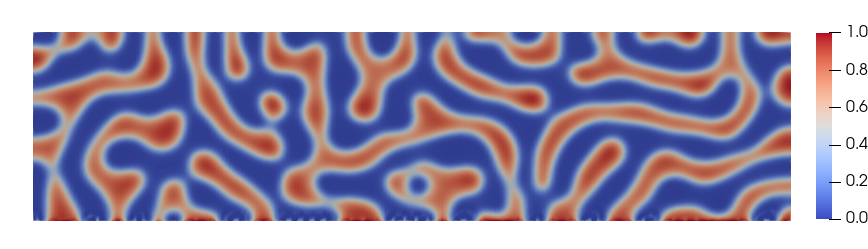}
	}
	
	\subfloat[]{
		\includegraphics[width=.45\textwidth]{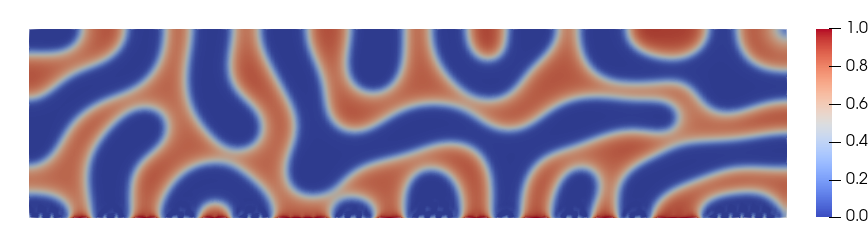}
	}
	\hfill
	\subfloat[]{
		\includegraphics[width=.45\textwidth]{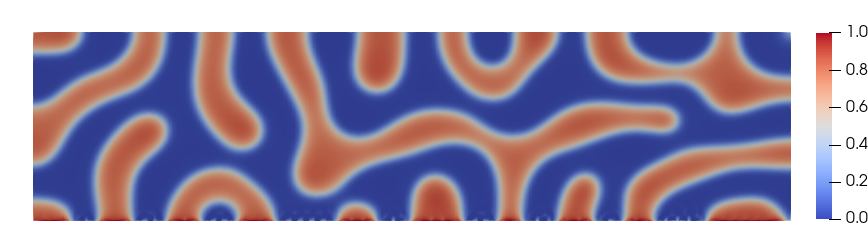}
	}
	
	\subfloat[]{
		\includegraphics[width=.45\textwidth]{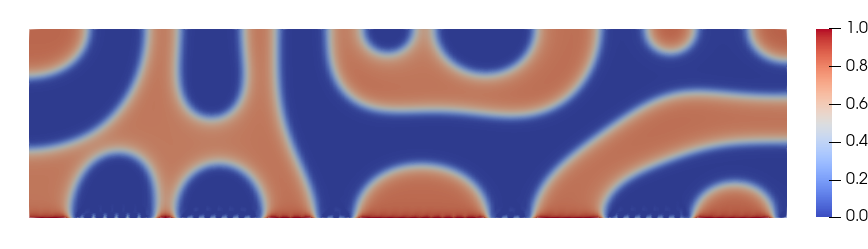}
	}
	\hfill
	\subfloat[]{
		\includegraphics[width=.45\textwidth]{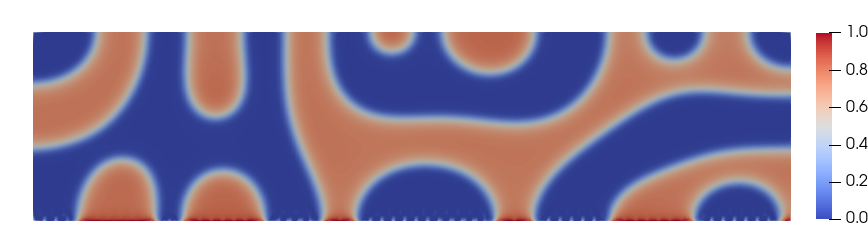}
	}
	\caption{%
		Example 2D morphology evolution at five different times without substrate patterning on the lower boundary.
		Time evolves from top to bottom. 
		The time step size $2 \cdot 10^{-4}$ was used and the depicted morphologies correspond to $t=0.02$, $t=0.06$, $t=0.4$, $t=2$, and $t=10$ (final time).
		Left: polymer concentration, right: non-fullerene acceptor concentration.
		The spatial discretization was chosen $200 \times 100$.
		Furthermore, the interface parameters $\epsilon_\nfa=\epsilon_p=10^{-3}$, initial concentrations $\phi_p=\phi_\nfa=0.35 \pm 0.01$, degrees of polymerization $N_p=N_\nfa=20, N_s=1$, Flory--Huggins interaction parameters $\chi_{p,\nfa}=1, \chi_{p,s}=\chi_{\nfa,s}=0.3$, and the polynomial approximation \eqref{eq:polynomial_potential} of the logarithmic potential were used.
	}
	\label{figure:2d_morphologies}
\end{figure}

\begin{figure}[htp]
	\subfloat[]{
		\includegraphics[width=.45\textwidth]{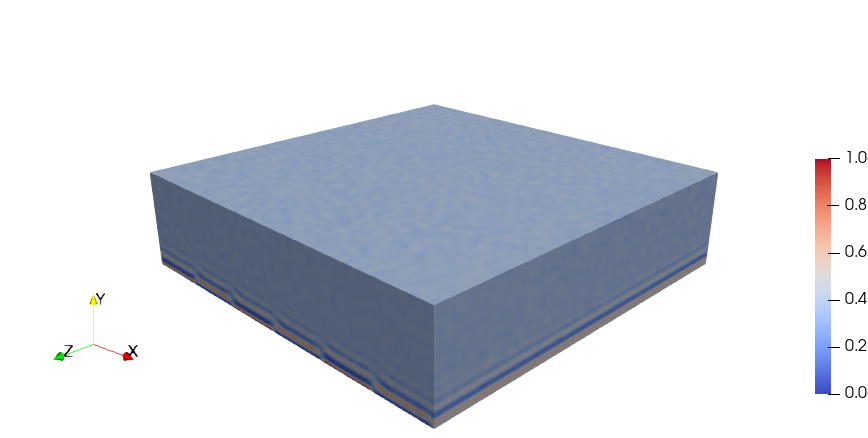}
	}
	\hfill
	\subfloat[]{
		\includegraphics[width=.45\textwidth]{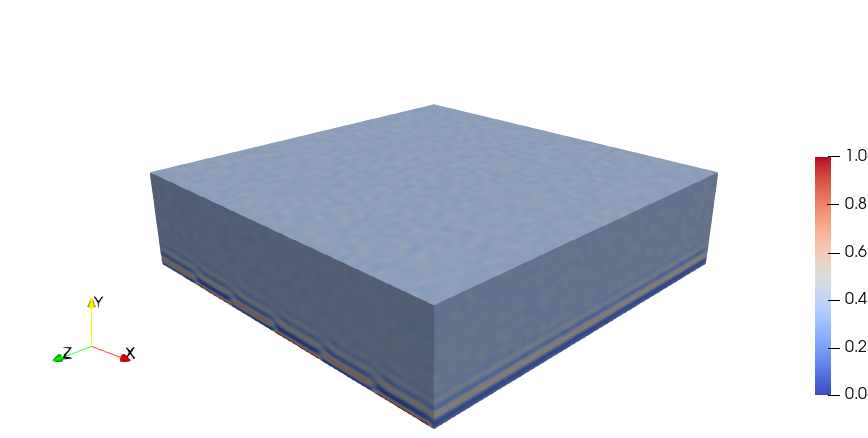}
	}
	
	\subfloat[]{
		\includegraphics[width=.45\textwidth]{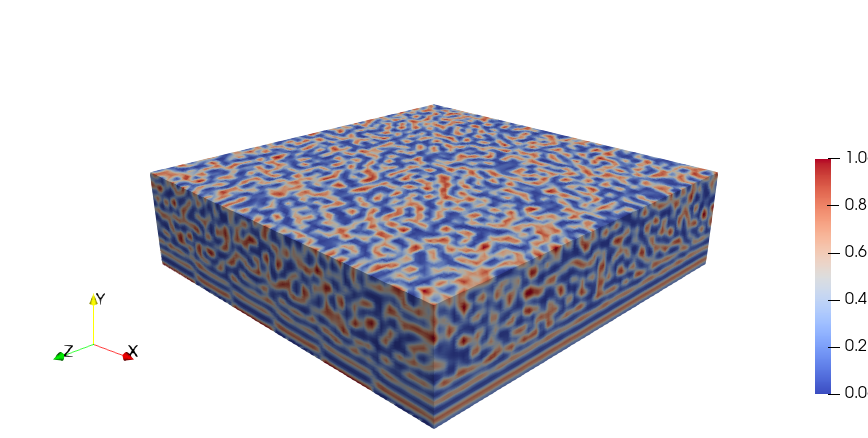}
	}
	\hfill
	\subfloat[]{
		\includegraphics[width=.45\textwidth]{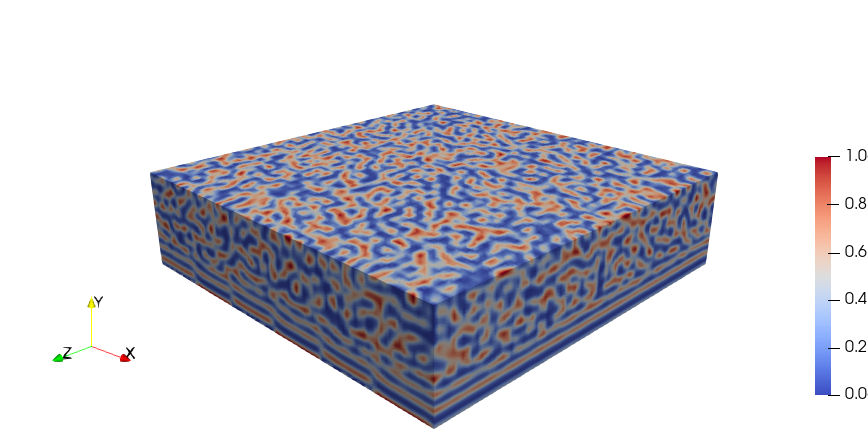}
	}
	
	\subfloat[]{
		\includegraphics[width=.45\textwidth]{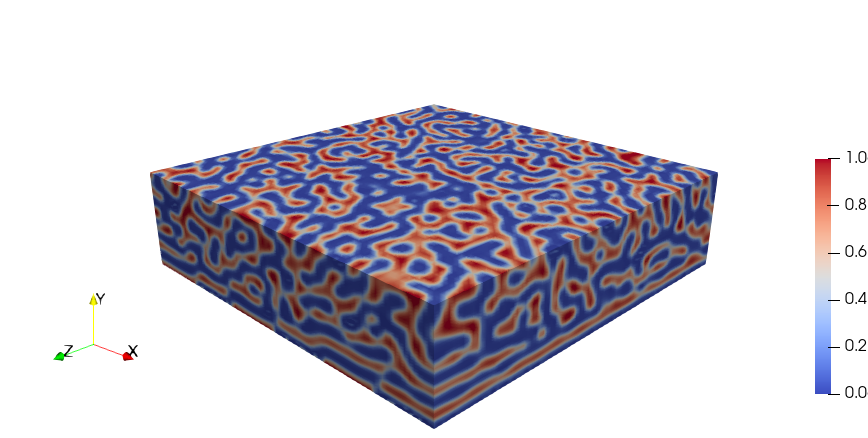}
	}
	\hfill
	\subfloat[]{
		\includegraphics[width=.45\textwidth]{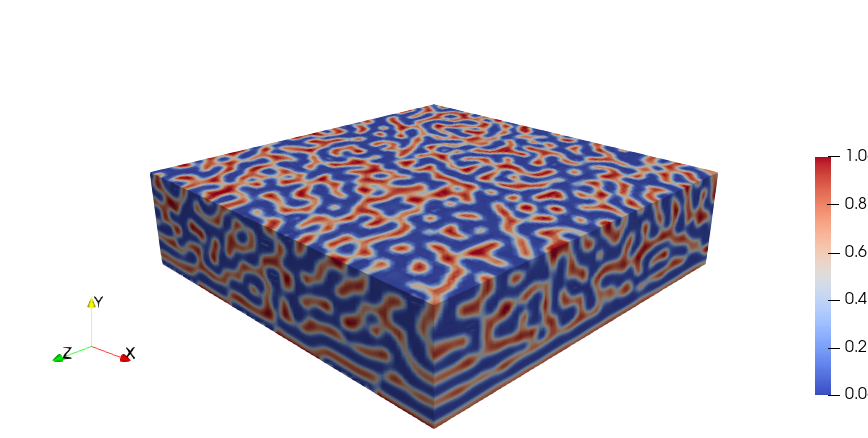}
	}
	
	\subfloat[]{
		\includegraphics[width=.45\textwidth]{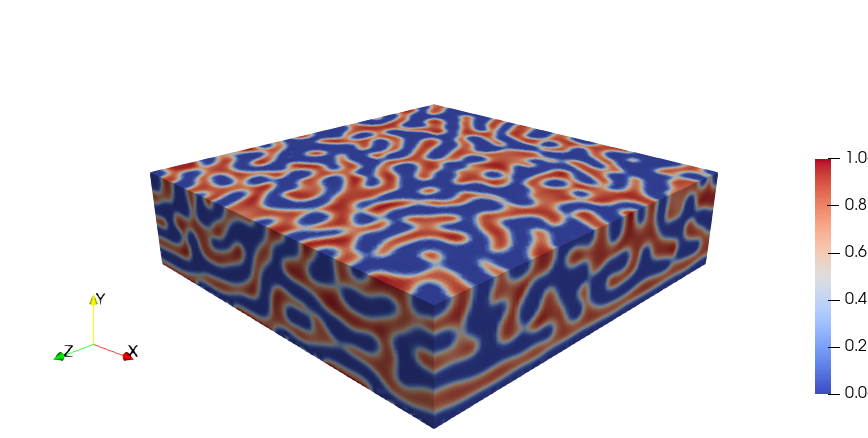}
	}
	\hfill
	\subfloat[]{
		\includegraphics[width=.45\textwidth]{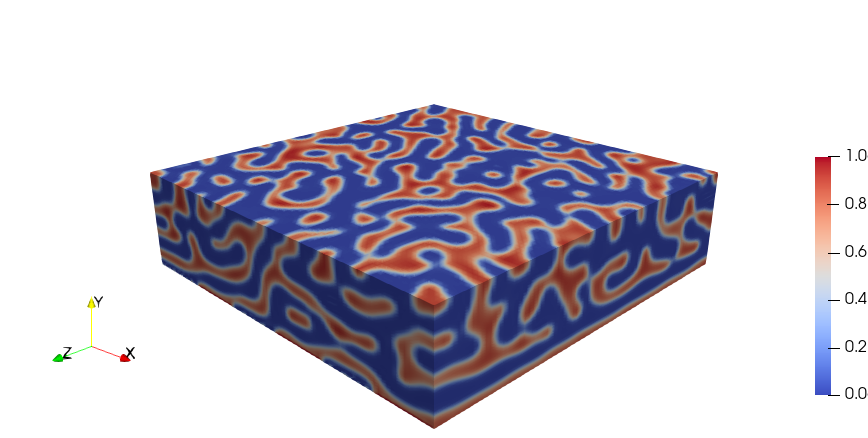}
	}
	\caption{%
		Example 3D morphology evolution at four different times with substrate patterning on the lower boundary.
		Time evolves from top to bottom. 
		The time step size $2 \cdot 10^{-4}$ was used and the depicted morphologies correspond to $t=0.012$, $t=0.04$, $t=0.24$, and $t=1$ (final time).
		Left: polymer concentration, right: non-fullerene acceptor concentration.
		The spatial discretization was chosen $80 \times 40 \times 80$.
		Furthermore, the interface parameters $\epsilon_\nfa=\epsilon_p=10^{-3}$, initial concentrations $\phi_p=\phi_\nfa=0.35 \pm 0.01$, degrees of polymerization $N_p=N_\nfa=20, N_s=1$, Flory--Huggins interaction parameters $\chi_{p,\nfa}=1, \chi_{p,s}=\chi_{\nfa,s}=0.3$, and the polynomial approximation \eqref{eq:polynomial_potential} of the logarithmic potential were used.
	}
	\label{figure:3d_morphologies}
\end{figure}

\section{Exploiting Simulated Morphologies}
\label{section:exploiting_morphologies}

This section explores how the generated morphologies link to the electrical performance of devices. 
We first used a thresholding function to convert the computationally generated morphologies into binary images. 
We then discredited these images using a regular triangular mesh, vertex removal was performed to reduce the number of triangles in the mesh, and the back of the object sealed with two more triangles to generate an enclosed object. 
The mesh describing the morphology was loaded into our 2D finite difference drift diffusion model (\url{https://www.gpvdm.com}) \cite{MaityRamananArieseMacKenzieVonHauff:2022:1}. 
The structure was then projected onto a 2D finite difference grid by shooting light rays from each mesh point on the grid to the top of the simulation world. 
If the ray intersected an odd number of faces of an object we were able to tell that the point on the grid resides within that object, if an even number of faces is encountered the point lies outside the object. 
The object with an odd number of triangles and a face closest to the light source was taken as the object associated with the 2D grid point. 
Mobility values were set depending upon which object the mesh point resided in. 
A high value of electron mobility (\SI{0.1}{\meter\squared\per\second\per\volt}) and a low value of hole mobility (\SI{1e-10}{\meter\squared\per\second\per\volt}) was assumed for the non-polymer phase, and the opposite values for the polymer phase.

The bi-polar drift-diffusion equations along with Poisson's equation were then solved on the 2D finite difference grid using a Scharfetter--Gummel discretization and Fermi--Dirac stastistics. 
The set of equations are written out in a single Jacobian and all solved together iteratively using Newton's method.

\begin{figure}[H]
	\centering
	\includegraphics[width=0.7\textwidth]{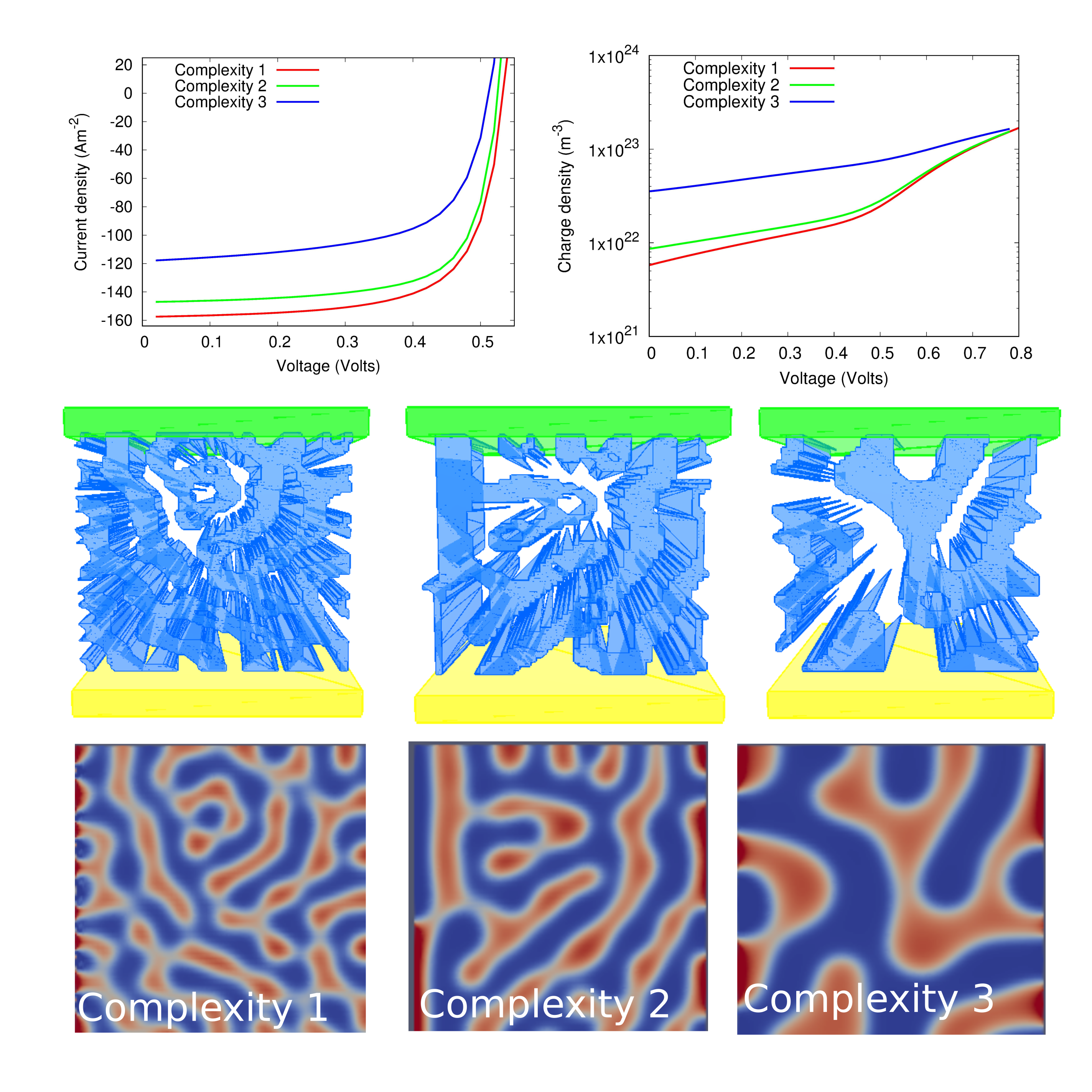}
	\caption{Lower row: Computationally generated morphologies; Middle row: The three morphologies discretized and turned into 3D structures; Top row: Current and carrier density for each structure plotted as a function voltage applied between the top (green) and bottom (yellow) contacts.}
	\label{figure:ofet1}
\end{figure}

Three morphologies were taken from the simulations above with different levels of coarseness/complexity. 
It can be seen from \cref{figure:ofet1} that the more fine grained the morphology the higher the short circuit current ($J_\textup{sc}$) and open circuit voltage ($V_\textup{oc}$) are. 
Open circuit voltage is the voltage produced when the current generated by the cell is zero (intersection of the $x$ axis), and short circuit current is the current produced by the cell when the external voltage is zero (intersection of $y$ axis). 
Higher values of $J_\textup{sc}$ and $V_\textup{oc}$ are generally associated with more efficient devices. 
It can also be seen that the charge density is higher in devices with less fine grained morphology.

\section{Conclusion and Outlook}
\label{section:conclusion}

In this paper we have presented a computationally efficient pipeline for the evaluation of a phase--field model describing the morphology evolution of organic solar cells. 
Our preconditioning strategy enabled the parameter-robust simulation of the discretized equations. 
Additionally, we showcased how the results can be used to further characterize properties of the organic solar cells such as current-voltage characteristics.

In order to be able to compare our results to experimental data, the next step will be to perform a proper non-dimensionalisation using realistic physical parameters such as diffusion constants or Flory--Huggins interaction parameters. 
As a result, large constants might appear in the model, rendering the numerical solution even more challenging. 
Furthermore, to be able to obtain results over longer time scales, \eg as needed to model spin coating experiments, additional homogenisation techniques might need to be employed to obtain coarse-grained models.

% Insert bibliography
\printbibliography

\end{document}